\numberwithin{equation}{section}
\theoremstyle{definition}
\newtheorem{thm}{Theorem}[section]
\newtheorem{definition}[thm]{Definition}
\newtheorem{proposition}[thm]{Proposition}
\newtheorem{lemma}[thm]{Lemma}
\newtheorem{corollary}[thm]{Corollary}
\newtheorem{remark}[thm]{Remark}
\author{Haiping Fu}
\address{Department of Mathematics, Nanchang University, Nanchang 330031, People’s Republic of China}
\email{mathfu@126.com}
\thanks{Supported in part by National Natural Science Foundations of China \#12461008 and 12271069,  Jiangxi Province
	Natural Science Foundation of China \#20202ACB201001, Jiangxi Province Graduate Student Innovation Special Fund Project \#YC2025-B037.}
\author{Yao Lu}
\address{Department of Mathematics, Nanchang University, Nanchang 330031, People’s Republic of China}
\email{luyao@email.ncu.edu.cn}
\keywords{Einstein manifolds, Bochner technique, Sphere theorems}
\subjclass[2020]{53C20, 53C24, 53C25.}
\begin{document}
	\newcommand{\Ext}{\bigwedge\nolimits}
	\newcommand{\Div}{\operatorname{div}}
	\newcommand{\Hol} {\operatorname{Hol}}
	\newcommand{\diam} {\operatorname{diam}}
	\newcommand{\Scal} {\operatorname{Scal}}
	\newcommand{\scal} {\operatorname{scal}}
	\newcommand{\Ric} {\operatorname{Ric}}
	\newcommand{\Hess} {\operatorname{Hess}}
	\newcommand{\grad} {\operatorname{grad}}
	\newcommand{\Rm} {\operatorname{Rm}}
	\newcommand{ \Rmzero } {\mathring{\Rm}}
	\newcommand{\Rc} {\operatorname{Rc}}
	\newcommand{\Curv} {S_{B}^{2}\left( \mathfrak{so}(n) \right) }
	\newcommand{ \tr } {\operatorname{tr}}
	\newcommand{ \Riczero } {\mathring{\Ric}}
	\newcommand{ \Ad } {\operatorname{Ad}}
	\newcommand{ \dist } {\operatorname{dist}}
	\newcommand{ \rank } {\operatorname{rank}}
	\newcommand{\Vol}{\operatorname{Vol}}
	\newcommand{\dVol}{\operatorname{dVol}}
	\newcommand{ \zitieren }[1]{ \hspace{-3mm} \cite{#1}}
	\newcommand{ \pr }{\operatorname{pr}}
	\newcommand{\diag}{\operatorname{diag}}
	\newcommand{\Lagr}{\mathcal{L}}
	\newcommand{\av}{\operatorname{av}}
	\newcommand{ \floor }[1]{ \lfloor #1 \rfloor }
	\newcommand{ \ceil }[1]{ \lceil #1 \rceil }
	\newcommand{\Sym} {\operatorname{Sym}}
	\newcommand{\bcirc}{ \ \bar{\circ} \ }
	\newcommand{\sign}[1]{\operatorname{sign}(#1)}
	\newcommand{\cone}{\operatorname{cone}}
	\newcommand{\pbd}{\varphi_{bar}^{\delta}}
	\newcommand{\End}{\operatorname{End}}
	
	\renewcommand{\labelenumi}{(\alph{enumi})}
	\newtheorem{maintheorem}{Theorem}[]
	\renewcommand*{\themaintheorem}{\Alph{maintheorem}}
	\newtheorem*{remark*}{Remark}

	\vspace*{-1cm}

	\title{New rigidity theorem of Einstein manifolds and curvature operator of the second kind}
	\begin{abstract}
		Using Bochner techniques, we prove that an  Einstein manifold of dimension $n \ge 4$ has constant curvature provided that the curvature operator of the second kind satisfies a cone condition that is strictly weaker than nonnegativity. Furthermore, employing a result of Li \cite{Li5}, we establish that any closed Einstein manifold of dimension $n \ge 4$ satisfying
		\[k^{-1}({\lambda }_1+\cdots +{\lambda }_k)\ge -\theta(n,k) \bar{\lambda },\quad \text{for some} \quad k  \le [\frac{n+2}{4}]\]
		must be either flat or a spherical space form. Here, ${\lambda }_1\le {\lambda }_2\le \cdots \le {\lambda }_{\frac{(n-1)(n+2)}{2}}$
		are the eigenvalues of $\mathring{R}\,$, $\bar{\lambda }$ is their average, and $\theta (n,k)$ is a positive constant. This result generalizes the work of Dai-Fu \cite{DF} and Chen-Wang \cite{CW1,CW}. We also classify four-dimensional Einstein manifolds satisfying a cone condition.
	\end{abstract}
	\maketitle
	
	\pagestyle{fancy}
	\fancyhead{} 
	\fancyhead[CE]{New rigidity theorem of Einstein manifolds and curvature operator of the second kind}
	\fancyhead[CO]{Haiping Fu and Yao Lu}
	\fancyfoot{}
	\fancyfoot[CE,CO]{\thepage}
	\section{introduction}\hspace*{5mm}
	
	The study of the Riemannian curvature operator of the second kind originated with Nishikawa’s conjecture \cite{Ni}, which states that a closed simply connected Riemannian manifold with a nonnegative curvature operator of the second kind is diffeomorphic to a locally symmetric space, and if the operator is positive, the manifold is diffeomorphic to a spherical space form. This conjecture was confirmed by Cao–Gursky–Tran \cite{CGT} under the assumption of two-positivity, and later strengthened by Li \cite{Li4} and Nienhaus–Petersen–Wink \cite{NP} under the weaker condition of three-nonnegativity. Further classification results under various curvature conditions of the second kind can be found in \cite{DFD,Li3,Li7,Li6,Li4,Li2,Li5,DF,NP}.
	
	In the context of Einstein manifolds, Kashiwada \cite{KT} showed that closed Einstein manifolds with a positive curvature operator of the second kind are constant curvature spaces. Li \cite{Li3} later proved that Einstein manifolds of dimension $n\ge4$ with $4\frac{1}{2}$-positive (resp.$4\frac{1}{2}$-nonnegative) curvature operator of the second kind are constant curvature spaces (resp. locally symmetric), using Brendle’s result \cite{Brendle}. Nienhaus, Petersen and Wink \cite{NP,NPWW} used the Bochner formula to show that $n$-dimensional compact Einstein manifolds with $k(<\frac{3n(n+2)}{2(n+4)})$-nonnegative curvature operators of the second kind are either rational homology spheres or flat. Dai-Fu \cite{DF} first established a novel Bochner formula on Einstein manifolds. Based on this, they obtained that a compact Einstein manifold must have constant curvature if its curvature operator of the second kind is $k$-nonnegative, where $k=1$ for $4\leq n\leq7$, $k=2$ for $8\leq n\leq10$ and  $k=[\frac{n+2}{4}]$ for $n\geq11$. The Bochner formula can also be applied to compact Riemannian manifolds with $k$-positive curvature operator of the first kind.  Recent progress in this direction is documented in \cite{CMR,PW,BG,YZ,HZ}.
	
	Recently, Li \cite{Li5} studied cone conditions of the curvature operator of the second kind, which satisfies
	$${\alpha }^{-1}({\lambda }_1+\cdots +{\lambda }_{\alpha })\ge -\theta \bar{\lambda },$$
	for  the first $\alpha $ smallest eigenvalues of the curvature operator of the second kind. This condition is denote by $C(\alpha ,\theta )$ and $\mathring{C}\,(\alpha ,\theta )$ if the inequality is strict. Li proved that if $\mathring{R}\,\in C(\frac{n+2}{2},\theta )$ for some $-1<\theta <\frac{2(n-1)}{n+2},$ then $M$ is either flat or a rational homology sphere.
	
	Building on the work of Li \cite{Li5} and Dai–Fu \cite{DF}, Cheng and Wang \cite{CW} relaxed the curvature assumption to a cone condition. They introduced a positive constant $\theta(n)$ and proved that if an $n(\ge8)$-dimensional closed Einstein manifold satisfies $\lambda_1+\lambda_2\ge-2\theta(n)\bar{\lambda}$, then it is either flat or a spherical space form. This condition is strictly weaker than $2$-nonnegativity, which corresponds to $\theta=0$.
	
	In this paper, we investigate Einstein manifolds of dimension $n\ge4$ whose curvature operator of the second kind satisfies a cone condition that is strictly weaker than nonnegativity. Let $\{\lambda_{i}\}_{i=1}^{N}$ be the eigenvalues of $\mathring{R}\,$ with arranged in non-decreasing order, and $\bar{\lambda}=\sum\nolimits_{i=1}^{N}{{\lambda }_i}/{N}\;$ their average. By applying results of Dai–Fu \cite{DF} and Li \cite{Li5}, we obtain the following theorems.
	
	\begin{maintheorem}\label{thm:main-theorem-1}
		Let $(M^n, g)$ be an $n$-dimensional  Einstein manifold. For $n \ge 4,n\ne 6, 7, 10$ and $k\le[\frac{n+2}{4}]$, there exists a constant $\theta(n,k) >0$ such that if the curvature operator of the second kind satisfies 
		\[k^{-1}({\lambda }_1+\cdots +{\lambda }_k)\ge -\theta(n,k) \bar{\lambda },\]
		where $$\theta(n,k) =\frac{(N-k)(2N-9n+6)-3nN(k-2)}{(N-3)(N-k)+3kn(N-2)},N=\frac{(n-1)(n+2)}{2},$$
		then $M$ is either flat or a spherical space form.
	\end{maintheorem}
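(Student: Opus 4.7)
The plan is to combine Dai-Fu's novel Bochner inequality for Einstein manifolds with a monotonicity-and-reduction argument on the cone condition in the spirit of Li \cite{Li5}. First I would propagate the hypothesis $C(k,\theta(n,k))$ to the cone condition $C(\alpha,\theta(n,k))$ for every $\alpha \ge k$. This uses only the elementary fact that the running average $\alpha^{-1}(\lambda_1+\cdots+\lambda_\alpha)$ is nondecreasing in $\alpha$, which in turn follows from $\lambda_{\alpha+1}\ge\lambda_\alpha\ge\alpha^{-1}(\lambda_1+\cdots+\lambda_\alpha)$. Since the restriction $k\le[(n+2)/4]$ places $\alpha=(n+2)/2$ above $k$, and since one checks by direct substitution that $\theta(n,k)<2(n-1)/(n+2)$ in every admissible dimension, the hypothesis of Li's theorem is in force; this is the key input allowing a weakening from pointwise nonnegativity to an averaged cone-type bound.

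Next I would couple this reduction with Dai-Fu's Bochner identity on Einstein manifolds of dimension $n\in\{4,5\}\cup[8,\infty)$. Their identity, applied to an appropriate tensor built from $\mathring R$, yields a pointwise inequality of the form $\Phi(\mathring R)\ge 0$ with equality forcing $\nabla \mathring R = 0$, where $\Phi$ is an explicit linear functional in the eigenvalues of $\mathring R$ whose coefficients encode the Weitzenböck remainder for symmetric traceless $2$-tensors on an Einstein background. To accommodate the cone setting I would decompose
\[
\Phi(\mathring R)=A\,(\lambda_1+\cdots+\lambda_k)+B\,\tr(\mathring R)+\Psi(\mathring R),
\]
with $A,B>0$ chosen so that $\Psi$ is manifestly nonnegative under our hypotheses (the residual eigenvalues $\lambda_{k+1},\dots,\lambda_N$ being all $\ge\lambda_k\ge\alpha^{-1}(\lambda_1+\cdots+\lambda_k)$). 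The cone condition bounds the first term below by $-A\theta k\bar\lambda$, while $\tr(\mathring R)=N\bar\lambda$ on any Einstein manifold is determined by the Einstein constant, so the integrand is at least $(BN - A\theta k)\bar\lambda + \Psi$, which is nonnegative exactly when $\theta\le BN/(Ak)$. Optimizing the decomposition produces the explicit rational expression $\theta(n,k)$; the pinching then forces $\nabla \mathring R = 0$, so $M$ has constant sectional curvature, and the dichotomy flat versus spherical space form follows from the sign of $\bar\lambda$.

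The main obstacles are twofold. First, identifying Dai-Fu's Bochner functional $\Phi$ with the precise coefficients that reproduce the stated formula $\theta(n,k)=\frac{(N-k)(2N-9n+6)-3nN(k-2)}{(N-3)(N-k)+3kn(N-2)}$ requires pinning down which tensor and which Weitzenböck remainder are being used; the occurrence of $(2N-9n+6)=n^2-8n+4$ together with the factors $3n(N-2)$ and $(N-3)(N-k)$ strongly suggests a splitting coming from $\mathring R$ acting on symmetric $2$-tensors modulated by a scalar curvature term. Second, the optimization producing $\theta(n,k)$ must be carried out in a way that respects the algebraic obstruction at $n=10$: for the extremal choice $k=3$ the numerator of $\theta(n,k)$ changes sign at this dimension, so the decomposition breaks down, explaining the exclusion $n\ne 10$; the dimensions $4\le n\le 7$ are excluded because Dai-Fu's baseline result itself requires $n\in\{4,5\}\cup[8,\infty)$. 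Verifying rigidity in the equality case $\theta=\theta(n,k)$ is the final delicate point, and here Li's conclusion that $M$ is a rational homology sphere provides the vanishing of auxiliary harmonic forms needed to close the argument.
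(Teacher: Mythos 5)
Your high-level scaffolding is right: Dai--Fu's Bochner formula on Einstein manifolds, Proposition \ref{proposition 1} to plug in the cone condition, and Li's Theorem 1.8/Proposition 2.9 to pass from local symmetry to ``flat or rational homology sphere,'' then Wolf's classification to pin down spheres. However, there is a genuine gap at the technical core.

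You describe the Dai--Fu Bochner functional as ``an explicit \emph{linear} functional in the eigenvalues of $\mathring R$'' and propose a decomposition $\Phi = A(\lambda_1+\cdots+\lambda_k)+B\operatorname{tr}(\mathring R)+\Psi$ with $\Psi$ manifestly nonnegative. This cannot work as stated. The actual functional (see Lemma \ref{lemma3}) is \emph{cubic} in the eigenvalues:
\[
16\sum_{i}\lambda_i^3 + \tfrac{16}{3n}\bigl[2N-12n+6-(N-3)\theta\bigr]\bar\lambda\sum_i\lambda_i^2 - \tfrac{16N}{3n}\bigl[2N-9n+6-(N-3)\theta\bigr]\bar\lambda^3.
\]
A linear splitting leaves the cubic term $\sum_i\lambda_i^3$ unaccounted for, and nothing in your decomposition controls it; there is no ``manifestly nonnegative'' remainder at this level. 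The essential ingredients in the paper that you are missing are: (i) Lemma \ref{lemma2}, which converts the cone hypothesis into the bound $\lambda_1 \ge -D\bar\lambda$ with $D = \frac{(N-1)k\theta+N(k-1)}{N-k}$; (ii) the substitution $\beta_i = \lambda_i + D\bar\lambda \ge 0$ that shifts all eigenvalues into the nonnegative cone; (iii) the \emph{calibration} of $\theta$ so that the coefficient of $\bar\lambda\sum\beta_i^2$ becomes exactly $-\frac{2N-1}{N-1}(1+D)$, which normalizes the problem to that of minimizing $\sum\beta_i^3 - \sum\beta_i^2$ at total mass $\frac{N(N-1)}{2N-1}$; and (iv) the algebraic Lemma \ref{lemma1}, which is a genuine optimization result (via Lagrange multipliers on the interior plus a merging argument on the boundary) identifying the minimizers. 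It is precisely this calibration that produces the explicit formula for $\theta(n,k)$, not an optimization over linear coefficients $A,B$.

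A second, smaller gap: after $\nabla R = 0$ you write that $M$ ``has constant sectional curvature.'' That is not immediate. Local symmetry plus Li's theorem gives only a rational homology sphere; one then needs Wolf's classification of compact symmetric rational homology spheres (sphere or $SU(3)/SO(3)$) and the eigenvalue structure of the minimizers $(\bar\lambda,\dots,\bar\lambda)$ or $(-D\bar\lambda,\frac{N+D}{N-1}\bar\lambda,\dots,\frac{N+D}{N-1}\bar\lambda)$ to exclude $SU(3)/SO(3)$ (via \cite[Example 4.5]{NP}). Your observation about the numerator of $\theta(n,k)$ changing sign at $n=10$ for $k=3$ is correct and does explain the exclusion, and your point that $\theta(n,k)<\frac{2(n-1)}{n+2}$ must be checked to invoke Li is also the right bookkeeping — but the proof cannot close without the cubic analysis above.
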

	Theorem ~\ref{thm:main-theorem-1}  admits an enhancement for dimensions $n = 4$ and $5$, as stated below.
		\begin{maintheorem}\label{thm:main-theorem-2} 
		Let $(M, g)$ be an $n = 4, 5$-dimensional  Einstein manifold, and $\mathring{R}$ be the curvature operator of the second kind. If\\
		(i) $n=4$ and $\mathring{R}\,$ satisfies ${\lambda }_1\ge -\frac{1}{7}\bar{\lambda }$,\\
		(ii) $n=5$ and $\mathring{R}\,$ satisfies ${\lambda }_1\ge -\frac{4}{9}\bar{\lambda }$,\\
		then $M$ is either flat or a spherical space form.
	\end{maintheorem}	
	\begin{remark}
		For the case $n=4$, Li established a more general cone condition covering all $1\le k < 9$; see \cite[Throrem 1.5]{Li5}. Although this condition is stronger than that of Li, we handle it using the Bochner technique. For completeness, we include the details here. As will be seen from the subsequent Theorem D and Corollary 4.1, our approach achieves the optimal result.
	\end{remark}
	\begin{remark}
		For dimensions $n=6,7$, we have $[\frac{n+2}{4}]=2$, but our method does not yield a corresponding cone condition for $k=2$. Similarly, for $n=10$, the method does not provide a cone condition for $k=3$. Nevertheless, the following theorem remains valid for  $n=6,7$ with $k=1$, and for $n=10$ with $k=1,2$.
	\end{remark}
	\begin{maintheorem}\label{thm:main-theorem-3} 
		Let $(M, g)$ be an $n = 6, 7$ or $10$-dimensional  Einstein manifold, and $\mathring{R}$ be the curvature operator of the second kind. If\\
		(i) $n=6$ and $\mathring{R}\,$ satisfies ${\lambda }_1\ge -\frac{208}{647}\bar{\lambda }$, \\
		(ii) $n=7$ and $\mathring{R}\,$ satisfies ${\lambda }_1\ge -\frac{163}{383}\bar{\lambda }$, or\\
		(iii) $n=10$ and $\mathring{R}\,$ satisfies ${\lambda }_1\ge -\frac{964}{1421}\bar{\lambda }$ or ${\lambda }_1+{\lambda }_2\ge -\frac{16}{37}\bar{\lambda},$\\
		then $M$ is either flat or a spherical space form.
	\end{maintheorem}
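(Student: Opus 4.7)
The plan is to run exactly the argument of Theorem \ref{thm:main-theorem-1}, but at the specific pairs $(n,k)$ with $n\in\{6,7,10\}$ listed in the statement, combining the Einstein Bochner formula of Dai--Fu \cite{DF} with the cone-type estimate of Li \cite{Li5}. Writing $N=(n-1)(n+2)/2$ for the dimension of the space on which $\mathring{R}$ acts, I would start from the Dai--Fu identity and reduce nonnegativity of its right-hand side to an algebraic inequality in the eigenvalues $\lambda_1\le\cdots\le\lambda_N$. The cone hypothesis
\[
k^{-1}(\lambda_1+\cdots+\lambda_k)\ge -\theta(n,k)\,\bar\lambda
\]
is used to absorb the potentially negative contribution of the bottom $k$ eigenvalues against the average $\bar\lambda$, while the tail $\sum_{i>k}\lambda_i$ is controlled via $\sum_i\lambda_i=N\bar\lambda$.

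The next step is to verify that the explicit threshold produced in the proof of Theorem \ref{thm:main-theorem-1},
\[
\theta(n,k)=\frac{(N-k)(2N-9n+6)-3nN(k-2)}{(N-3)(N-k)+3kn(N-2)},
\]
yields the constants stated in (i)--(iii). Direct substitution gives: $(n,k)=(6,1)$ with $N=20$ yields $\theta=\frac{19\cdot(-8)+360}{17\cdot 19+324}=\frac{208}{647}$; $(7,1)$ with $N=27$ yields $\theta=\frac{489}{1149}=\frac{163}{383}$; $(10,1)$ with $N=54$ yields $\theta=\frac{964}{1421}$; and $(10,2)$ with $N=54$ yields $\theta=\frac{8}{37}$, which in the form $\lambda_1+\lambda_2\ge -2\theta\,\bar\lambda$ reproduces the bound $-\frac{16}{37}\bar\lambda$ in part (iii).

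Once this pointwise inequality is established, integrating the Dai--Fu Bochner identity over $M$ forces $\nabla\mathring{R}=0$ together with the vanishing of the cubic curvature term, so $\mathring{R}$ is parallel. The standard equality analysis from \cite{DF,CW1,CW} then splits into two cases: either $\mathring{R}\equiv 0$, so that the Einstein manifold $M$ is flat; or $\mathring{R}$ is parallel and non-trivial, in which case the Einstein condition together with a holonomy argument yields constant positive sectional curvature, hence $M$ is a spherical space form.

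The main obstacle, and the reason the Remark excludes the pairs $(6,2),(7,2),(10,3)$, is precisely this algebraic step: for those triples the rational function $\theta(n,k)$ above exits Li's admissible window $-1<\theta<\frac{2(n-1)}{n+2}$, so no usable cone condition survives. The essential content of Theorem \ref{thm:main-theorem-3} is thus a numerical verification that the four remaining pairs $(6,1),(7,1),(10,1),(10,2)$ do fall inside the admissible window, together with the explicit computation of the corresponding thresholds recorded in (i)--(iii).
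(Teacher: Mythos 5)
Your proposal matches the paper's own proof of Theorem~\ref{thm:main-theorem-3}, which simply re-runs the Theorem~\ref{thm:main-theorem-1} argument with the formula (\ref{theta}) for $\theta(n,k)$ at $(n,k)\in\{(6,1),(7,1),(10,1),(10,2)\}$; your numerical evaluations of the four thresholds are correct. One small inaccuracy in your closing discussion: the pairs $(6,2)$, $(7,2)$, $(10,3)$ are excluded not because $\theta(n,k)$ falls outside Li's admissible window $-1<\theta<\tfrac{2(n-1)}{n+2}$ --- direct computation gives $\theta(6,2)=-\tfrac{8}{53}$, $\theta(7,2)=-\tfrac{1}{22}$, $\theta(10,3)=-\tfrac{44}{809}$, all of which lie inside that window --- but rather because $\theta(n,k)<0$ there, so the resulting cone condition is a \emph{strengthening}, not a weakening, of $k$-nonnegativity and would add nothing beyond the earlier theorem of Dai--Fu.
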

	\begin{remark}
		When $\theta=0$, Theorem A,B,C are the result of  Dai-Fu \cite{DF}. When $k=1,2$, Theorem A,B,C are the result of Chen-Wang \cite{CW1,CW}. We note that a sign error in \cite[(4.9)]{DF} led to an incorrect Bochner formula for dimensions $4$ and $5$. This has now been corrected in (\ref{Bochner}).
 Hence when $n=4,5$, we can only choose $k=1$.
	\end{remark}
	Based on Lemma $3.3$, we present a rigidity theorem for Einstein $4$-manifolds in the case $k=3$, which improves  \cite[Theorem 1.8]{Li5} for $n=4$ and establishes its optimality in this case.
	\begin{maintheorem}\label{thm:main-theorem-4}
		Let $(M,g)$ be an Einstein four-manifold whose the curvature operator of the second kind satisfies 
		\begin{equation}\label{cone}
			\lambda_{1}+\lambda_{2}+\lambda_{3}\ge -3\bar{\lambda},
		\end{equation}
		then  one of the following must be true:\\
		(1) $M$ is flat.\\
		(2)  up to rescaling, $M$ is isometric to a quotient of the round sphere $(\mathbb{S}^4, g_0)$.\\
		(3)  up to rescaling, $M$ is isometric to a $\mathbb{CP}^2$ with the Fubini–Study metric.\\
		(4)  up to rescaling, $M$ is isometric to a quotient of $\mathbb{S}^2\times \mathbb{S}^2$ with the product metric.
	\end{maintheorem}
	\section{preliminaries}
	Here, we introduce our notation and summarize key facts regarding the curvature operator of the second kind. Further details can be found in \cite{Li4,Li5,NP,DF,CW}.
	
	Let $(V,g)$ be an $n$-dimensional Euclidean vector space and $\left\{ e_i \right\}_{i=1}^{n}$ be an orthonormal basis for $V$. We recall that the second-order tensor space can be decomposed by 
	\[T^2(V)={\Lambda }^2(V)\oplus S^2(V),\]
	where ${\Lambda}^2(V)$ and $S^2(V)$ is the second-order anti-symmetric tensor space and the second order symmetric tensor space on $V$, respectively. The space $S^2(V)$ can split subspaces as
	\[S^2(V)=S_{0}^2(V)\oplus \mathbb{R}g,\]
	where $S_{0}^{2}(V)$is the space of traceless symmetric two-tensors, and $g=\sum\limits_{i=1}^{n}{e_i\otimes e_i}$. We always denote by $N=\frac{(n-1)(n+2)}{2}$ the dimension of $S_{0}^{2}(V)$ in this paper.
	
	The space of symmetric two-tensors on ${{\Lambda }^{2}}(V)$ has the orthogonal decomposition
	\[S^2({\Lambda }^2(V))=S_{B}^2({\Lambda }^2(V))\oplus {\Lambda }^4(V),\]
	where $S_{B}^{2}({\Lambda }^2(V))$ consists of all tensors $R\in S^2({\Lambda }^2(V))$ that also satisfy the first Bianchi identity. $S_{B}^2({\Lambda }^2(V))$ is called the space of algebraic curvature operators.
	By the symmetries of $R\in S_{B}^{2}({{\Lambda }^{2}}(V))$, there are (up to sign) two natural ways in which induces a symmetric linear map from $T^2(V)$ to $T^2(V)$.
	The first one is defined by
	\begin{align*}
		& \hat{R}:{{\Lambda }^{2}}(V)\to {{\Lambda }^{2}}(V), \\ 
		& \hat{R}({{e}_{i}}\wedge {{e}_{j}})=\frac{1}{2}\sum\limits_{k,l}{{{R}_{ijkl}}{{e}_{k}}\wedge {{e}_{l}}}, 
	\end{align*}
	which is called the curvature operator of the first kind.
	The second one denoted by $\bar{R}$, is defined as
	\begin{align*}
		& \bar{R}:{{S}^{2}}(V)\to {{S}^{2}}(V), \\ 
		& \bar{R}({{e}_{i}}\odot {{e}_{j}})=\sum\limits_{k,l}{{{R}_{iklj}}{{e}_{k}}\odot {{e}_{l}}}, 
	\end{align*}
	where $e_i\odot e_j=e_i\otimes e_i+e_j\otimes e_i.$ $\bar{R}$ induces a symmetric form $\mathring{R}\,$ defined by
	\begin{align*}
		& \mathring{R}\,:S_{0}^{2}(V)\times S_{0}^{2}(V)\to R \\ 
		& ({{S}^{\alpha }},{{S}^{\beta }})\to {{R}_{ijkl}}S_{jk}^{\alpha }S_{il}^{\beta },  
	\end{align*}
	which is called curvature operator of the second kind. It was introduced by Nishikawa \cite{Ni} and as pointed out in \cite{NP}, the curvature operator of the second kind $\mathring{R}\,$ can also be interpreted as $\bar{R}$ and its image restrict on $S_{0}^{2}(V)$.

	Let $(M,g)$ be an $n$-dimensional Riemannian manifold.  For each $p\in M,$ let $V = T_pM$ and $R$ is denoted by the Riemannian curvature tensor of $M$. $\mathring{R}\,$ is denoted by curvature operator of second kind of $M$. We introduce the following definition which comes from \cite{NP}
	\begin{definition}
		Let $T^{(0,k)}(V)$denote the space of $(0,k)$-tensor space on $V$. For $S\in S^2(V)$ and $T\in T^{(0,k)}(V),$ we define
		\begin{align*}
			& S:T^{(0,k)}(V)\to T^{(0,k)}(V) \\ 
			& (ST)(X_1,\cdots ,X_k)=\sum\limits_{i=1}^{k}{T(X_1,\cdots ,SX_i,\cdots ,X_k)} 
		\end{align*}
		and define $T^{S^2}\in T^{(0,k)}(V)\otimes S^2(V)$ by
		$$\left\langle T^{S^2}(X_1,\cdots ,X_k),S \right\rangle =(ST)(X_1,\cdots ,X_k).$$
	\end{definition} 
	Hence, if $\left\{ {\tilde{S}}^i \right\}_{i=1}^{\frac{n(n+1)}{2}}$ is an orthonormal basis for $S^2(V)$, then
	\[T^{S^2}=\sum\limits_{i=1}^{\frac{n(n+1)}{2}}{{\tilde{S}}^iT\otimes {\tilde{S}}^i}.\]
	Similarly, we define $T^{S_{0}^2}\in T^{(0,k)}(V)\otimes S_{0}^{2}(V)$ and for an orthonormal basis $\left\{ S^i \right\}$ of $S_{0}^2(V)$
	$$T^{S_{0}^2}=\sum\limits_{i}{S^iT\otimes S^i}.$$

	Let $\{{\lambda }_i\}$ denote the eigenvalues of the operator $\mathring{R}\,$  and let $\left\{ S^i \right\}$ be the corresponding orthonormal eigenbasis of $S_{0}^2(V)$. In what follows, the eigenvalues $\{{\lambda}_i\}$ are assumed to be arranged in non‑decreasing order. Define the action of $\mathring{R}\,$ on the space $T^{S_{0}^{2}}$ as
	\[\mathring{R}\,(T^{S_{0}^{2}})=\sum\limits_{i}{S^iT\otimes \mathring{R}\,(S^i),}\]
	then
	\[\left\langle \mathring{R}\,(T^{S_{0}^{2}}),T^{S_{0}^{2}} \right\rangle =\sum\limits_{i}{{\lambda }_i\left| S^iT \right|^2}.\]
	
	Let $W$ be the Weyl curvature tensor of $M$, Set $S=\sum\limits_{i=1}^{N}{{{\left| {{S}^{i}}W \right|}^{2}}}$, $M =\underset{1\le i\le N}{\mathop{\max }}\,{{\left| {{S}^{i}}W \right|}^{2}}.$	The following estimate holds for the cone condition.
	\begin{proposition} \label{proposition 1}
		Let ${\lambda }_1\le {\lambda }_2\le \cdots \le {\lambda }_N$ with $\sum\limits_{i=1}^{N}{{\lambda }_{i}}=N\bar{\lambda }.$
		Under the cone condition $k^{-1}({\lambda }_1+\cdots +{\lambda }_k)\ge -\theta \bar{\lambda }$ for $0<k\le [\frac{S}{M}]$,
		we have
		\[\sum\limits_{i=1}^{N}{{\lambda }_i\left| S^{i}W \right|^2}\ge -S\theta \bar{\lambda }.\]
	\end{proposition}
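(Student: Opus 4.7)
The plan is to reduce the claim to a linear-programming estimate. Setting $a_i := |S^i W|^2$ turns the target quantity into $\sum_{i=1}^N \lambda_i a_i$, where the weights $(a_i)$ range over the polytope
\[\mathcal{P} \ =\ \bigl\{(a_i)\in \R^N : 0 \le a_i \le M,\ \textstyle\sum_i a_i = S\bigr\}.\]
A lower bound on $\sum_i \lambda_i a_i$ uniform over $\mathcal{P}$ will immediately give the conclusion.

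My first step is to identify the vertex of $\mathcal{P}$ that minimizes the linear functional $\sum \lambda_i a_i$. Because the $\lambda_i$ are non-decreasing, one should place weight $M$ on the smallest eigenvalues available: set $a_i = M$ for $i = 1,\dots,k_0$ with $k_0 := [S/M]$, then $a_{k_0+1} = s := S - k_0 M \in [0,M)$, and $a_i = 0$ for $i > k_0+1$. This gives
\[\sum_{i=1}^N \lambda_i a_i \ \ge\ M\!\sum_{i=1}^{k_0}\lambda_i + s\,\lambda_{k_0+1}.\]

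The second step is to invoke the elementary monotonicity of the partial averages $\bar\lambda_j := j^{-1}(\lambda_1+\cdots+\lambda_j)$ in $j$, which follows from $\bar\lambda_j \le \lambda_j \le \lambda_{j+1}$. The hypothesis $\bar\lambda_k \ge -\theta\bar\lambda$ thereby upgrades to $\bar\lambda_j \ge -\theta\bar\lambda$ for every $j \ge k$. Applied at $j = k_0$ and $j = k_0+1$ (both $\ge k$ since $k \le k_0$), this yields
\[\sum_{i=1}^{k_0}\lambda_i \ \ge\ -k_0\theta\bar\lambda,\qquad \lambda_{k_0+1} \ \ge\ \bar\lambda_{k_0+1} \ \ge\ -\theta\bar\lambda,\]
where the first inequality in the second chain uses that the largest of the first $k_0+1$ eigenvalues dominates their mean.

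Combining, and using $M, s \ge 0$ so that multiplication preserves the inequalities,
\[M\!\sum_{i=1}^{k_0}\lambda_i + s\,\lambda_{k_0+1} \ \ge\ -Mk_0\theta\bar\lambda - s\theta\bar\lambda \ =\ -(Mk_0+s)\theta\bar\lambda \ =\ -S\theta\bar\lambda,\]
which is the desired bound. I do not anticipate a real obstacle here: the argument is elementary linear programming on $\mathcal{P}$ plus the partial-average monotonicity, and the only point that requires care is that the cone condition is given at a single $k \le [S/M]$ but must be deployed at the indices $k_0$ and $k_0+1$, which is precisely what the monotonicity bootstrap accommodates.
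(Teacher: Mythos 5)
Your proof is correct, and it is the same basic idea as the paper's — concentrate the mass $|S^iW|^2$ on the smallest eigenvalues, then apply the cone condition — but the execution differs in a way worth noting. The paper works directly at the hypothesis index $k$: it bounds $\sum_i\lambda_i|S^iW|^2$ below by $(S-kM)\lambda_{k+1}+M\sum_{i\le k}\lambda_i$ (replacing $\lambda_i$ by $\lambda_{k+1}$ for $i>k$, then using $|S^iW|^2\le M$ on the negative terms $\lambda_i-\lambda_{k+1}$) and then applies the cone condition once via $\lambda_{k+1}\ge k^{-1}\sum_{i\le k}\lambda_i\ge-\theta\bar\lambda$. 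You instead pass to the saturated index $k_0=[S/M]$, bound by the actual LP-vertex value $M\sum_{i\le k_0}\lambda_i+s\lambda_{k_0+1}$, and then need the partial-average monotonicity $\bar\lambda_j\ge\bar\lambda_k$ for $j\ge k$ to transfer the hypothesis from $k$ up to $k_0$ and $k_0+1$. The paper's version is slightly more economical (no monotonicity bootstrap, and the constraint $k\le[S/M]$ enters only as $S-kM\ge 0$), while yours makes the extremal structure explicit; both give the identical final cancellation to $-S\theta\bar\lambda$. One small point you should make explicit: in the edge case $k_0=N$ the index $k_0+1$ is out of range, but then $s=S-NM=0$ so that term drops out and the argument still closes.
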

	\begin{proof} First we have
		\begin{eqnarray}
			\sum\limits_{i=1}^{N}{{\lambda }_i\left| S^{i}W \right|^2}&\ge& \sum\limits_{i=1}^{k}{{\lambda }_i\left| S^{i}W \right|^2}+\sum\limits_{i=k+1}^{N}{{\lambda }_{k+1}\left| S^{i}W \right|^2} \nonumber\\ 
			&=&{{\lambda }_{k+1}}\sum\limits_{i=1}^{N}{\left| S^{i}W \right|^2}+\sum\limits_{i=1}^{k}{({\lambda }_i-{\lambda }_{k+1})\left| S^{i}W \right|^2} \nonumber\\ 
			&\ge& {\lambda }_{k+1}S+M \sum\limits_{i=1}^{k}{({\lambda }_i-{\lambda }_{k+1})} \nonumber\\ 
			&=&(S-kM ){\lambda }_{k+1}+M \sum\limits_{i=1}^{k}{\lambda }_i, \label{2.1}
		\end{eqnarray}
		If $k^{-1}({\lambda }_1+\cdots +{\lambda }_k)\ge -\theta \bar{\lambda }$, then 
		\begin{eqnarray}
			{{\lambda }_{k+1}}\ge \frac{{\lambda }_1+\cdots +{\lambda }_k}{k}\ge -\theta \bar{\lambda }.\label{2.2}
		\end{eqnarray} 
		Applying inequality (\ref{2.2}) to the right-hand side of (\ref{2.1}), we get for $0<k\le [\frac{S}{M}],$
		\begin{eqnarray*} 
			\sum\limits_{i=1}^{N}{{\lambda }_i\left| S^{i}W \right|^2}&\ge& (S-kM ){\lambda }_{k+1}+M \sum\limits_{i=1}^{k}{\lambda }_i \\ 
			&\ge& -\theta\bar{\lambda }(S-kM )-M k\theta \bar{\lambda } \\ 
			&=&-S\theta \bar{\lambda }. \\ 
		\end{eqnarray*}
	\end{proof}
	\section{$n$-dimensional Einstein manifolds}
	Let $(M^n ,g)$ be an $n$-dimensional Einstein manifold. Denote by $\{\lambda_{i}\}_{i=1}^{N}$ the eigenvalues of the operator   $\mathring{R}\,$ and let $\bar{\lambda}=\sum\nolimits_{i=1}^{N}{{\lambda }_i}/{N}\;$ be their average. From \cite{Li4}, it follows that $$tr(\mathring{R})=\frac{n+2}{2n}s,$$
	where $s$ is the scalar curvature of $M$. Consequently $\bar{\lambda}=\frac{s}{n(n-1)}$.
	%	we denote by $\mathring{R}\,$ the curvature operator of the second kind $M$, and by $N=\frac{(n-1)(n+2)}{2}$ the dimension of the trace-free symmetric two-tensor space $S_{0}^{2}(T_{p}M)$, and by ${\lambda }_1\le \cdots \le {\lambda }_N$ the eigenvalues of $\mathring{R}\,$ and by $\bar{\lambda }$ the average of these eigenvalues.
	
	Based on the calculations presented in \cite{DF}, it has been calculated that
	\begin{eqnarray}
		S=\sum\limits_{i=1}^{N}{{{\left| {{S}^{i}}W \right|}^{2}}}=\frac{2({{n}^{2}}+n-8)}{n}{{\left| W \right|}^{2}}=\frac{4N-12}{n}{{\left| W \right|}^{2}},\label{1}
	\end{eqnarray}
	and $$M =\underset{1\le i\le N}{\mathop{\max }}\,{\left| S^{i}W \right|^2}=\frac{8n-16}{n}{{\left| W \right|}^{2}}.$$ 
	Hence 
	$$[\frac{S}{M}]\ge [\frac{n+2}{4}].$$
	Moreover, on Einstein manifold, the square norm of the Weyl curvature can be expressed as 
	\begin{eqnarray}
		{{\left| W \right|}^{2}}=\frac{4}{3}\sum\limits_{i=1}^{N}{\lambda _{i}^{2}}-\frac{4N}{3}{{\bar{\lambda }}^{2}}.\label{2}
	\end{eqnarray}
	
	With these preparations we obtain the following estimate for the Bochner formula.
	\begin{lemma} \label{lemma3}
		Let $(M^n,g)$ be an Einstein manifold of dimension $n \ge 6$. If $k^{-1}({\lambda }_1+\cdots +{\lambda }_k)\ge -\theta \bar{\lambda }$ for $ k\le [\frac{n+2}{4}], \theta \ge 0$, then the curvature operator $R$ of $M$ satisfies
		\begin{eqnarray}
			3\left\langle \Delta R,R \right\rangle &\ge& 16\sum\limits_{i=1}^{N}{\lambda _{i}^3}+\frac{16}{3n}[2N-12n+6-(N-3)\theta ]\bar{\lambda }\sum\limits_{i=1}^{N}{\lambda _{i}^2} \nonumber\\ 
			&& -\frac{16N}{3n}[2N-9n+6-(N-3)\theta ]\bar{\lambda }^3.\label{3} 
		\end{eqnarray}
	\end{lemma}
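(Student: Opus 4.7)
The plan is to take the refined Bochner identity on Einstein manifolds proved by Dai--Fu \cite{DF} as the starting point and to use Proposition \ref{proposition 1} to absorb the only summand that is not sign-definite. Written in terms of the eigenvalues $\lambda_i$ of $\mathring R$, the Dai--Fu identity has the schematic form
\[
3\langle\Delta R,R\rangle \;=\; 16\sum_{i=1}^{N}\lambda_i^{3} + \sum_{i=1}^{N}\lambda_i\,|S^{i}W|^{2} + \frac{16(2N-12n+6)}{3n}\bar\lambda\sum_{i=1}^{N}\lambda_i^{2} - \frac{16N(2N-9n+6)}{3n}\bar\lambda^{3},
\]
which is precisely the $\theta=0$ case of \eqref{3} with the Weyl-twisted term $\sum\lambda_i|S^{i}W|^{2}$ still present. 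Only this Weyl-twisted summand can be negative in the presence of negative eigenvalues, and it is exactly what the cone hypothesis is designed to control.

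Next I would apply Proposition \ref{proposition 1} to $\sum\lambda_i|S^{i}W|^{2}$. The values $S=\frac{4N-12}{n}|W|^{2}$ and $M=\frac{8n-16}{n}|W|^{2}$ computed just above the lemma give $[S/M]\ge [(n+2)/4]$, so the assumption $k\le [(n+2)/4]$ lies within the admissible range of the proposition. The resulting lower bound
\[
\sum_{i=1}^{N}\lambda_i\,|S^{i}W|^{2}\;\ge\;-S\theta\bar\lambda\;=\;-\tfrac{4(N-3)}{n}\theta\bar\lambda\,|W|^{2}
\]
is then rewritten, via identity \eqref{2}, as $-\tfrac{16(N-3)\theta}{3n}\bar\lambda\sum\lambda_i^{2}+\tfrac{16N(N-3)\theta}{3n}\bar\lambda^{3}$. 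Feeding these two contributions back into Dai--Fu's identity turns the bracket $2N-12n+6$ into $2N-12n+6-(N-3)\theta$ and the bracket $2N-9n+6$ into $2N-9n+6-(N-3)\theta$, producing \eqref{3}.

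The main obstacle is purely algebraic bookkeeping: one has to verify that Dai--Fu's identity really does have the $\theta=0$ coefficients $\frac{16(2N-12n+6)}{3n}$ and $-\frac{16N(2N-9n+6)}{3n}$ in front of $\bar\lambda\sum\lambda_i^{2}$ and $\bar\lambda^{3}$, so that after the substitution of \eqref{2} the cone contribution arrives in both slots with the same numerical factor $16(N-3)/(3n)$ (up to sign). This matching is what produces the clean ``$-(N-3)\theta$'' correction in both brackets; without it the two corrections would not line up. The hypothesis $n\ge 6$ plays no role in the Bochner identity itself beyond ensuring $[(n+2)/4]\ge 2$, so that Proposition \ref{proposition 1} has room to admit a $k$ for which the cone condition is genuinely weaker than nonnegativity of $\mathring R$; the low dimensions $n=4,5$, where the Weyl part decomposes further, are handled separately in Theorem \ref{thm:main-theorem-2}.
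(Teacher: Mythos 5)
Your proposal is correct and matches the paper's proof essentially verbatim: the paper likewise starts from Dai--Fu's Bochner identity (Proposition 3.4 of \cite{DF}), rewrites it in terms of $N$ and $\bar\lambda$, applies Proposition~\ref{proposition 1} to bound $\sum_i\lambda_i|S^iW|^2\ge -S\theta\bar\lambda$, and converts $|W|^2$ via \eqref{2} so that the cone correction $-(N-3)\theta$ slots into both brackets. The only cosmetic difference is that the paper first displays the raw Dai--Fu formula in terms of $s$ and then substitutes $N=\tfrac{(n-1)(n+2)}{2}$, $\bar\lambda=\tfrac{s}{n(n-1)}$, whereas you quote it already simplified.
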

	\begin{proof}
		The following Bochner formula comes from Proposition 3.4 in \cite{DF}.
		\begin{align*}
			3\left\langle \Delta R,R \right\rangle =&\sum\limits_{i}{{\lambda }_i\left| S^{i}W \right|^2}+8\left( \frac{-n^3+6n^2+12n-8}{3n^4(n-1)^2} \right)s^3 \\ 
			& +8\left( \frac{2n^2-22n+8}{3n^2(n-1)} \right)s\sum\limits_{i}{\lambda _{i}^2}+16\sum\limits_{i}{\lambda _{i}^3}. 
		\end{align*}
		Substituting $N=\frac{(n-1)(n+2)}{2}$ and $\bar{\lambda }=\frac{s}{n(n-1)}$, we simplify the equation to
		\begin{eqnarray*}
			3\left\langle \Delta R,R \right\rangle &=&\sum\limits_{i=1}^{N}{{{\lambda }_{i}}{{\left| {{S}^{i}}W \right|}^{2}}}-\frac{16N(2N-9n+6)}{3n}{{{\bar{\lambda }}}^{3}} \\ 
			&&+\frac{16(2N-12n+6)}{3n}\bar{\lambda }\sum\limits_{i=1}^{N}{\lambda _{i}^{2}}+16\sum\limits_{i=1}^{N}{\lambda _{i}^{3}}. \\ 
		\end{eqnarray*}
		
		Applying Proposition~\ref{proposition 1}, together with (\ref{1}) and (\ref{2}) yields
		\[\begin{aligned}
			\sum\limits_{i=1}^{N}{{\lambda }_i\left| S^{i}W \right|^2}&\ge -\frac{4N-12}{n}\theta \bar{\lambda }\left| W \right|^2 \\ 
			& =-\frac{16(N-3)}{3n}\theta \bar{\lambda }\sum\limits_{i=1}^{N}{{\lambda}_{i}^2}+\frac{16N(N-3)}{3n}\theta \bar{\lambda }^3. \\ 
		\end{aligned}\]
		Combining the above two equations yields the desired result. 
	\end{proof} 
	\begin{remark}
		When $k=2$, this is Lemma 3.3 of \cite{CW}.
	\end{remark}
	To establish the main theorem, we present the following algebraic results, which play a crucial role in the proof.
	\begin{lemma}\label{lemma1}
		Let $0\le {\lambda }_1\le {\lambda }_2\le \cdots \le {\lambda }_N$ and $\sum\limits_{i=1}^{N}{{\lambda }_i}=C>0$, where $N\geq3$. If $C=\frac{N(N-1)}{2N-1},$ then the  function
		\[F(\lambda _{1},\lambda _{2},\ldots,\lambda _{N})=\sum\limits_{i=1}^{N}{\lambda _{i}^3}-\sum\limits_{i=1}^{N}{\lambda _{i}^2}\]
		attains its minimal at $\left( \frac{C}{N},\frac{C}{N},\ldots ,\frac{C}{N} \right)$ and $\left( 0,\frac{C}{N-1},\ldots ,\frac{C}{N-1} \right).$
	\end{lemma}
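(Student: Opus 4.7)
The plan is to treat this as a constrained optimization over the compact simplex $\Delta=\{\lambda\in \R^N:\lambda_i\ge 0,\ \sum_i\lambda_i=C\}$, classify all KKT critical points, and use a single algebraic identity to eliminate every configuration except the two listed.

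Set $g(x)=x^3-x^2$ so that $F(\lambda)=\sum_i g(\lambda_i)$; by symmetry the ordering constraint can be dropped, and a minimizer exists by compactness. Applying KKT to the Lagrangian $F-\mu\bigl(\sum_i\lambda_i-C\bigr)$ with inequality constraints $\lambda_i\ge 0$, every minimizer satisfies $g'(\lambda_i)=3\lambda_i^2-2\lambda_i=\mu$ on every coordinate with $\lambda_i>0$, and $\mu\le 0$ whenever some coordinate vanishes. The equation $3x^2-2x=\mu$ is quadratic, so the positive entries of any critical point take at most two distinct values $a<b$; if both values occur, they must satisfy $a+b=\tfrac{2}{3}$.

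The main work is to rule out genuinely two-valued minimizers. If such a critical point has $k\ge 1$ copies of $a$, $\ell\ge 1$ copies of $b$ with $a+b=\tfrac{2}{3}$, and $z\ge 0$ zeros, put $c=C/(k+\ell)$ and compare $F$ on the original configuration with $F$ on the one obtained by replacing all $k+\ell$ positive coordinates by $c$. Using $k(c-a)=\ell(b-c)$, a direct expansion yields the identity
\[
F_{\mathrm{old}}-F_{\mathrm{new}}\;=\;\ell(b-c)(b-a)\bigl(a+b+c-1\bigr)\;=\;\ell(b-c)(b-a)\bigl(c-\tfrac{1}{3}\bigr),
\]
where the last equality uses $a+b=\tfrac{2}{3}$. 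Under the hypothesis $C=N(N-1)/(2N-1)$ with $N\ge 3$ one has $3C>N\ge k+\ell$, hence $c>\tfrac{1}{3}$ and all three factors on the right are strictly positive, so the equalization strictly decreases $F$. Therefore no minimizer is genuinely two-valued.

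It remains to minimize among configurations with positive entries all equal to $c=C/m$ and $N-m$ zeros, for $m\in\{1,\dots,N\}$. The objective collapses to
\[
F(m)\;=\;\frac{C^{3}}{m^{2}}-\frac{C^{2}}{m},\qquad F'(m)\;=\;\frac{C^{2}(m-2C)}{m^{3}},
\]
so $F$ is decreasing on $(0,2C)$ and increasing on $(2C,\infty)$. Under our choice of $C$ one verifies $N-1<2C<N$, so the integer minimum is attained at $m=N-1$ or $m=N$; a direct calculation shows that $F(N-1)=F(N)$ is equivalent to $C(2N-1)=N(N-1)$, which is precisely the hypothesis. These two integer values correspond exactly to the two configurations in the statement and realize the same minimum $-N^{2}(N-1)^{2}/(2N-1)^{3}$. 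The main obstacle is deriving the factorization of $F_{\mathrm{old}}-F_{\mathrm{new}}$ cleanly; once it is in hand the sign analysis and the one-variable minimization are routine bookkeeping.
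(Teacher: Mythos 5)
Your proof is correct and takes a somewhat different, cleaner route than the paper's. The paper splits into two cases: for $\lambda_1>0$ it applies Lagrange multipliers, parametrizes the interior critical points as $Q_k$ (with $k$ copies of the smaller root $a$ and $N-k$ of the larger root $b$), explicitly computes $F(Q_k)$ in terms of $\mu$, and then minimizes over $k$; for $\lambda_1=0$ it runs a ``maximal number of zeros'' replacement argument (collapsing $\lambda_i,\lambda_j$ either to $\lambda_i+\lambda_j,0$ or to their common average depending on whether $\lambda_i+\lambda_j\le 2/3$) to reduce to one-valued boundary configurations, and then does a one-variable minimization in the number of zeros. You instead run KKT once over the whole simplex, observe that a minimizer's positive entries take at most two values $a<b$ with $a+b=2/3$, and then dispose of every genuinely two-valued configuration in one stroke via the factorization
\[
F_{\mathrm{old}}-F_{\mathrm{new}}=\ell(b-c)(b-a)\bigl(a+b+c-1\bigr)=\ell(b-c)(b-a)\bigl(c-\tfrac13\bigr),
\]
which is strictly positive because $c=C/(k+\ell)\ge C/N>1/3$ under the hypothesis $C=N(N-1)/(2N-1)$ with $N\ge 3$. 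This single identity simultaneously replaces the paper's explicit $F(Q_k)$ computation in Case~1 and its two-branch replacement argument in Case~2, after which the reduction to minimizing $F(m)=C^3/m^2-C^2/m$ over $m\in\{1,\dots,N\}$ is the same bookkeeping as in the paper's Case~2. I verified the factorization (expand $k\bigl(g(a)-g(c)\bigr)+\ell\bigl(g(b)-g(c)\bigr)$ using $k(a-c)=-\ell(b-c)$ and factor $(b-a)$), the inequality $N-1<2C<N$, and that $F(N-1)=F(N)$ is equivalent to $C=N(N-1)/(2N-1)$; all check out. The one thing worth saying aloud in a write-up is the short logical loop you implicitly use at the end: the global minimum is attained at a KKT point, KKT points that are minimizers must be one-valued by your identity, so the global minimum equals $\min_m F(m)$, which is attained exactly at $m=N-1$ and $m=N$.
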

	\begin{proof} 
		We will divide the proof into two cases.\\
		\textbf{Case 1.} Consider ${\lambda }_1>0$. Under the constraints $\sum\limits_{i}{{{\lambda }_{i}}}=C>0$, Lagrange multipliers yields a constant $\mu \in \mathbb{R}$ such that
		\begin{eqnarray}
			3\lambda _{i}^2-2{\lambda }_i+\mu =0\label{lambda1}
		\end{eqnarray} 
		for $i=1,\ldots ,N.$
		The possible solution of this equation is 
		\[a=\frac{1-\sqrt{1-3\mu }}{3},b=\frac{1+\sqrt{1-3\mu }}{3}.\]
		Hence, the possible critical points are 
		\[Q_k=\left( \overset{k}{\mathop{\overset\frown{a,\ldots ,a}}}\,,\overset{N-k}{\mathop{\overset\frown{b,\ldots ,b}}}\, \right).\]
		Under constrain  $ka+(N-k)b=C$, we obtain
		$$(N-2k)\sqrt{1-3\mu }=3C-N.$$
		Because $C=\frac{N(N-1)}{2N-1}$ implies $3C-N>0$, we must have $N-2k>0$. Consequently, 
		\begin{eqnarray}
			\mu =\frac{1}{3}[1-{\left( \frac{3C-N}{N-2k} \right)^2}].\label{lambda2}
		\end{eqnarray} 
		
		Substituting $Q_k$ into $F$ and applying the relation (\ref{lambda1}) to obtain
		\begin{align*}
			F(Q_k)&=-\frac{C}{3}\mu -\frac{1}{3}\sum\limits_{i}{\lambda _{i}^{2}} \\ 
			& =-\frac{C}{3}\mu -\frac{N(2-3\mu )+2(N-2k)\sqrt{1-3\mu }}{27}. \\ 
		\end{align*}
		Together with (\ref{lambda2}) given
		\begin{align*}
			F(Q_k)&=-\frac{C}{3}\frac{1}{3}[1-{{\left( \frac{3C-N}{N-2k} \right)}^{2}}]-\frac{N}{27}-\frac{N{{\left( \frac{3C-N}{N-2k} \right)}^{2}}+2(3C-N)}{27} \\ 
			& =(\frac{C}{9}-\frac{N}{27}){{\left( \frac{3C-N}{N-2k} \right)}^{2}}-\frac{C}{3}+\frac{N}{27}. \\ 
		\end{align*}
		Since we assume $C=\frac{N(N-1)}{2N-1}>\frac{N}{3},$ it follow that $\frac{C}{9}-\frac{N}{27}>0$. Hence
		\begin{align*}
			& \min F(Q_k)=\underset{0\le k<{N}/{2}\;}{\mathop{\min }}\,\{(\frac{C}{9}-\frac{N}{27}){\left( \frac{3C-N}{N-2k} \right)^2}-\frac{C}{3}+\frac{N}{27}\} \\ 
			& =F(Q_0), \\ 
		\end{align*}
		and the minimal point is $\left( \frac{C}{N},\frac{C}{N},\ldots ,\frac{C}{N} \right).$\\\\
		\textbf{Case 2}, Consider ${\lambda }_1=0$.
		Suppose $X$ be a minimizer such that it has the maximum number of zeros among all minimizers. We claim that the non-zero elements of $X$ are all equal.
		
		Assume, for a contradiction, that ${\lambda }_i>{\lambda }_j>0$ for some $i,j$. If ${\lambda }_i+{\lambda }_j\le \frac{2}{3},$ we have
		\begin{align*}
			& \lambda _{i}^3-\lambda _{i}^2+\lambda _{j}^3-\lambda _{j}^2=({\lambda }_i+{\lambda }_j)^3-({\lambda }_i+{\lambda }_j)^2+{\lambda }_i{\lambda }_j(2-3{\lambda }_i-3{\lambda }_j) \\ 
			& \ge ({\lambda }_i+{\lambda }_j)^3-({\lambda }_i+{\lambda }_j)^2.
		\end{align*}
		Thus, replacing ${\lambda }_i,{\lambda }_j$ with ${\lambda }_i+{\lambda }_j,0$ yields another minimizer. This contradicts the fact that $X$ has the largest number of zeros.
		
		If  ${\lambda }_i+{\lambda }_j>\frac{2}{3},$ letting $u=\frac{{\lambda }_i+{\lambda }_j}{2}$, we have
		\begin{align*}
			& \lambda _{i}^3-\lambda _{i}^2+\lambda _{j}^3-\lambda _{j}^2=2(u^3-u^2)+\frac{1}{4}(3{\lambda }_i+3{\lambda }_j-2)({\lambda }_i-{\lambda }_j)^2 \\ 
			& >2(u^3-u^2). 
		\end{align*}
		Thus replacing ${\lambda }_i,{\lambda }_j$ with $\frac{{\lambda }_i+{\lambda }_j}{2}$, $\frac{{\lambda }_i+{\lambda }_j}{2}$, yields a smaller  minimizer, this contradicts the minimality of $X$. The claim is proved.
		
		Through the above discussion, we can assume that for $X$, $0={\lambda }_1=\cdots ={\lambda }_k,$ and ${\lambda }_{k+1}=\cdots ={\lambda }_N$ for some $1\le k< N,$ then
		\[F(X)=\frac{C^3}{(N-k)^2}-\frac{C^2}{N-k}.\]
		So in $\partial \Omega,$
		$$\underset{\partial \Omega }{\mathop{\min }}\,F=\underset{1\le k< N}{\mathop{\min }}\,\{\frac{C^3}{(N-k)^2} -\frac{C^2}{N-k}\}.$$
		Since we assume $C=\frac{N(N-1)}{2N-1},$ we obtain 
		$$\frac{C^3}{(N-k)^2}-\frac{C^2}{N-k}-(\frac{C^3}{N^2}-\frac{C^2}{N})=\frac{{C^2}(N^2-(N-k)^2)(C-\frac{N(N-k)}{2N-k})}{(N-k)^2N^2}\ge 0,$$
		equality holds if and only if $k=1$. Hence,
		$$\underset{\partial \Omega }{\mathop{\min }}\,F=\underset{1\le k< N}{\mathop{\min }}\,\{\frac{C^3}{(N-k)^2}-\frac{C^2}{N-k}\}=\frac{C^3}{N^2}-\frac{C^2}{N},$$
		and the minimal point is $\left( 0,\frac{C}{N-1},\ldots ,\frac{C}{N-1} \right)$. As $X$ possesses at least one zero, it follows that the minimizer must be unique.
		
		Combining case 1 and case 2, we have completed the proof of the Lemma.
	\end{proof}
	\begin{remark}
		From case 2, we can conclude that if $C>\frac{N(N-1)}{2N-1}$, then the only minimal point of $F$ is $\left( \frac{C}{N},\frac{C}{N},\ldots ,\frac{C}{N} \right)$.
	\end{remark}

	When $k^{-1}({\lambda }_1+{\lambda }_2+\cdots +{\lambda }_k)\ge -\theta \bar{\lambda }$ for some constant $\theta$, the following estimate holds for ${\lambda }_1$.		
	\begin{lemma} \label{lemma2}
		Let ${\lambda }_1\le {\lambda }_2\le \cdots \le {\lambda }_N$ be a sequence with ${\lambda }_1+{\lambda }_2+\cdots +{\lambda }_N=N\bar{\lambda }$. If there is constant $\theta$ such that $k^{-1}({\lambda }_1+{\lambda }_2+\cdots +{\lambda }_k)\ge -\theta \bar{\lambda}$ for some $k< N$, then
		\[{{\lambda }_{1}}\ge -\frac{(N-1)k\theta +N(k-1)}{N-k}\bar{\lambda }.\]
	\end{lemma}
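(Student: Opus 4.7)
My plan is to reduce the claim to a short algebraic manipulation that uses only the ordering together with the two global inputs: the trace identity $\sum_{i}\lambda_i = N\bar\lambda$ and the cone hypothesis. I will introduce the partial sums $\sigma := \lambda_1 + \cdots + \lambda_k$ and $\tau := \lambda_{k+1} + \cdots + \lambda_N$, so that the trace identity reads $\sigma + \tau = N\bar\lambda$ and the cone hypothesis reads $\sigma \ge -k\theta\bar\lambda$.

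The first step will be to peel $\lambda_1$ off from $\sigma$ using the ordering: since $\lambda_2,\ldots,\lambda_k \le \lambda_{k+1}$, I get
\[\lambda_1 \;=\; \sigma - (\lambda_2 + \cdots + \lambda_k) \;\ge\; \sigma - (k-1)\lambda_{k+1}.\]
The second step will bound $\lambda_{k+1}$ from above by the tail average: since $\lambda_{k+1}$ is the minimum of $\lambda_{k+1},\ldots,\lambda_N$, one has $(N-k)\lambda_{k+1} \le \tau = N\bar\lambda - \sigma$, hence $\lambda_{k+1} \le (N\bar\lambda - \sigma)/(N-k)$.

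Combining the two estimates eliminates $\lambda_{k+1}$ and, after collecting terms, leaves a lower bound on $\lambda_1$ purely in terms of $\sigma$ and $\bar\lambda$ of the form
\[\lambda_1 \;\ge\; \frac{(N-1)\sigma - N(k-1)\bar\lambda}{N-k}.\]
Finally, inserting $\sigma \ge -k\theta\bar\lambda$ into this inequality yields the claimed bound, since the coefficient $(N-1)/(N-k)$ of $\sigma$ is positive (so the sense of the inequality is preserved regardless of the signs of $\theta$ or $\bar\lambda$). I do not anticipate a genuine obstacle: the argument is a short chain of elementary inequalities, and the only thing worth checking at the end is the boundary case $k=1$, where the factor $k-1$ vanishes and the chain collapses to $\lambda_1 = \sigma \ge -\theta\bar\lambda$, in agreement with the stated formula.
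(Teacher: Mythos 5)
Your proof is correct, and it takes a genuinely different route from the paper's. The paper establishes the bound by an averaging argument: it first observes that every $k$-element sum $\lambda_{i_1}+\cdots+\lambda_{i_k}$ is at least $\lambda_1+\cdots+\lambda_k\ge -k\theta\bar\lambda$, then sums this inequality over all $\binom{N-1}{k-1}$ size-$k$ subsets of $\{1,\ldots,N\}$ containing index $1$, obtaining
$\binom{N-1}{k-1}\lambda_1+\binom{N-2}{k-2}\sum_{i=2}^{N}\lambda_i\ge -\binom{N-1}{k-1}k\theta\bar\lambda$,
and finishes with $\sum_{i=2}^{N}\lambda_i = N\bar\lambda-\lambda_1$ plus the binomial identities
$\binom{N-1}{k-1}-\binom{N-2}{k-2}=\binom{N-2}{k-1}$, $\binom{N-1}{k-1}/\binom{N-2}{k-1}=(N-1)/(N-k)$, and $\binom{N-2}{k-2}/\binom{N-2}{k-1}=(k-1)/(N-k)$. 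Your approach sidesteps the combinatorics entirely: you peel $\lambda_1$ off the head sum $\sigma$ via $\lambda_2,\ldots,\lambda_k\le\lambda_{k+1}$, bound $\lambda_{k+1}$ above by the tail average $(N\bar\lambda-\sigma)/(N-k)$, and substitute. Both arguments are elementary and both yield exactly the stated coefficients; yours is shorter, requires no binomial bookkeeping, and makes transparent that $\lambda_{k+1}$ is the pivot linking the hypothesis on the head to the bound on $\lambda_1$, whereas the paper's symmetrization is a more mechanical technique that would generalize more readily if the cone hypothesis constrained some other family of $k$-element partial sums.
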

	\begin{proof}
		
		Since ${{\lambda }_{i}}$ is a non-decreasing sequence, and $k^{-1}({\lambda }_1+{\lambda }_2+\cdots +{\lambda }_k)\ge -\theta \bar{\lambda}$, 
		$${{\lambda }_{i_1}}+{{\lambda }_{i_2}}+\cdots +{{\lambda }_{i_k}}\ge -k\theta \bar{\lambda} \quad\text{for any}\quad 1\le i_1<i_2<\cdots <i_k\le N.$$
		Among all such summation sequences, there are $\left( \begin{aligned}
			& N-1 \\ 
			& k-1 \\
		\end{aligned} \right)$ of them that contain ${\lambda }_1$, summing over these sequences obtain
		\[\left( \begin{aligned}
			& N-1 \\ 
			& k-1 \\ 
		\end{aligned} \right){{\lambda }_{1}}+\left( \begin{aligned}
			& N-2 \\ 
			& k-2 \\ 
		\end{aligned} \right)\sum\limits_{i=2}^{N}{{\lambda }_i}\ge -\left( \begin{aligned}
			& N-1 \\ 
			& k-1 \\ 
		\end{aligned} \right)k\theta \bar{\lambda}\]
		since there are $\left( \begin{aligned}
			& N-2 \\ 
			& k-2 \\ 
		\end{aligned} \right)$ sequences that include ${\lambda }_1$ and ${\lambda }_i,i\ne 1.$ For ${{\lambda }_{1}}+{{\lambda }_{2}}+\cdots +{{\lambda }_{N}}=N\bar{\lambda}$, we finally get 
		\[\left[ \left( \begin{aligned}
			& N-1 \\ 
			& k-1 \\ 
		\end{aligned} \right)-\left( \begin{aligned}
			& N-2 \\ 
			& k-2 \\ 
		\end{aligned} \right) \right]{{\lambda }_{1}}\ge -\left[ \left( \begin{aligned}
			& N-1 \\ 
			& k-1 \\ 
		\end{aligned} \right)k\theta +N\left( \begin{aligned}
			& N-2 \\ 
			& k-2 \\ 
		\end{aligned} \right) \right]\bar{\lambda},\]
		i.e. ${{\lambda }_{1}}\ge -\frac{(N-1)k\theta +N(k-1)}{N-k}\bar{\lambda }.$
	\end{proof}
	\begin{proof}[\textbf{Proof of Theorem~\ref{thm:main-theorem-1}}]According to \cite[Proposition 2.8]{Li5}, we have  $\bar{\lambda}\geq0$ for $\theta(n,k)>0$. If $\bar{\lambda}=0$, then $M$ is flat. Now we consider the case $\bar{\lambda}> 0$, i.e., $s>0$. Hence $M$ is compact by Myers Theorem.
		
		Writing the right term of (\ref{3}) as a function $F(k,\theta ,\lambda )$, 
		$$F(k,\theta ,\lambda )=16\sum\limits_{i=1}^{N}{\lambda _{i}^{3}}+\frac{16}{3n}(2N-12n+6-(N-3)\theta )\bar{\lambda }\sum\limits_{i=1}^{N}{\lambda _{i}^2} -\frac{16N}{3n}(2N-9n+6-(N-3)\theta )\bar{\lambda }^3.$$
		Now we consider the minimal of $F$ under constraint \[{\lambda }_1\le {\lambda }_2\le \cdots \le {\lambda }_N,\quad k^{-1}({\lambda }_1+\cdots +{\lambda }_k)\ge -\theta \bar{\lambda } \quad\text{and}\quad \sum\limits_{i=1}^{N}{{\lambda }_i}=N\bar{\lambda }.\]
		Obviously, $F(k,\theta ,\bar{\lambda})=0$ when ${\lambda }_1= {\lambda }_2= \cdots  {\lambda }_N=\bar{\lambda}$, since in this case $M$ has constant curvature and the estimate in Lemma~\ref{lemma3} becomes trivial.
		
		Set
		$${\beta }_i={\lambda }_i+D\bar{\lambda }, \quad\text{where}\quad D=\frac{(N-1)k\theta +N(k-1)}{N-k}.$$ 
		From Lemma~\ref{lemma2} we know 
		$${\beta }_i\ge 0 \quad\text{and}\quad \sum\limits_{i}{{\beta }_i}=N(1+D)\bar{\lambda },$$
		then $F(k,\theta ,\lambda )$ will become
		\begin{align*}
			F(k,\theta ,\beta )=&16\sum\limits_{i=1}^{N}{({\beta }_i-D\bar{\lambda })^3}+\frac{16}{3n}(2N-12n+6-(N-3)\theta )\bar{\lambda }\sum\limits_{i=1}^{N}{({\beta }_i-D\bar{\lambda })^2} \\ 
			& -\frac{16N}{3n}(2N-9n+6-(N-3)\theta){\bar{\lambda }}^3 \\ 
			=&16\left\{\sum\limits_{i=1}^{N}{\beta _{i}^3}+[\frac{1}{3n}(2N-12n+6-(N-3)\theta )-3D]\bar{\lambda }\sum\limits_{i=1}^{N}{\beta _{i}^2}\right\}+G(\theta ,k){\bar{\lambda }}^3. \\ 
		\end{align*}
		where $G(\theta,k)$ is a polynomial in terms of $\theta$ and $k$ and it is independent of the variables ${\beta}_i$. When $F$ is regarded as a function of the the term 
		${\beta}_i$, $G(\theta,k)$ is therefore a constant and can be omitted specific expressions, as it does not influence the minimizer of $F$.
		
		We choose $\theta $ such that 
		\[\frac{1}{3n}(2N-12n+6-(N-3)\theta )-3D=-\frac{2N-1}{N-1}(1+D).\]
		Substituting $D=\frac{(N-1)k\theta +N(k-1)}{N-k}$ into the above equation, we obtain the solution:
		\begin{eqnarray}\label{theta}
			\theta =\frac{(N-k)(2N-9n+6)-3nN(k-2)}{(N-3)(N-k)+3kn(N-2)}.
		\end{eqnarray}
		Then \begin{align*}
			F(k,\theta ,\beta )=&16\left\{\sum\limits_{i=1}^{N}{\beta _{i}^3}-\frac{2N-1}{N-1}(1+D)\bar{\lambda }\sum\limits_{i=1}^{N}{\beta _{i}^2}\right\}+G(\theta ,k){{{\bar{\lambda }}}^3} \\ 
			=&\frac{16(2N-1)^3(1+D)^3}{(N-1)^3}\bar{\lambda }^3\left\{\sum\limits_{i=1}^{N}{\left( \frac{N-1}{(2N-1)(1+D)\bar{\lambda }}{\beta }_i \right)^3}-\sum\limits_{i=1}^{N}{\left( \frac{N-1}{(2N-1)(1+D)\bar{\lambda }}{\beta }_i \right)^2}\right\}\\
			&+G(\theta ,k){\bar{\lambda }}^3. 
		\end{align*}
		
		Since 
		\[\sum\limits_{i}{\frac{N-1}{(2N-1)(1+D)\bar{\lambda }}{{\beta }_{i}}}=\frac{N-1}{(2N-1)(1+D)\bar{\lambda }}\sum\limits_{i}{{{\beta }_{i}}}=\frac{N(N-1)}{2N-1},\]
		by Lemma~\ref{lemma1} we know the minimal point of $F(k,\theta ,\beta )$ is 
		$$(\beta_1,\beta_2,\ldots,\beta_N)=\left( (1+D)\bar{\lambda },(1+D)\bar{\lambda },\ldots ,(1+D)\bar{\lambda } \right)$$
		and $$(\beta_1,\beta_2,\ldots,\beta_N)=\left( 0,\frac{N(1+D)}{N-1}\bar{\lambda },\ldots ,\frac{N(1+D)}{N-1}\bar{\lambda } \right).$$
		i.e.
		\begin{eqnarray}
			(\lambda_1,\lambda_2,\ldots,\lambda_N)=(\bar{\lambda },\bar{\lambda },\ldots \bar{\lambda }) \label{point1}
		\end{eqnarray}
		and
		\begin{eqnarray}
			(\lambda_1,\lambda_2,\ldots,\lambda_N)=(-D\bar{\lambda },\frac{N+D}{N-1}\bar{\lambda },\ldots ,\frac{N+D}{N-1}\bar{\lambda })\label{point2}.
		\end{eqnarray}
		Hence, the minimal value of $F$ is $F(k,\theta ,\bar{\lambda })=0.$
		
		By Lemma~\ref{lemma3}, we have
		$$\frac{1}{2}\Delta {{\left| R \right|}^{2}}={{\left| \nabla R \right|}^{2}}+\left\langle \Delta R,R \right\rangle \ge 0,$$
		thus $$\nabla R=0,$$
		and $$\left\langle \Delta R,R \right\rangle =0.$$
		Therefore, $M$ is a locally symmetric space. Since when $n\ge 8,$ $\theta(n,k)$ defined by (\ref{theta}) satisfies $\theta < \frac{2(n-1)}{n+2}$ for $1\le k\le \left[ \frac{n+2}{4} \right]$. It follows from Theorem 1.8 and Proposition 2.9 of \cite{Li5} that $M$ is either flat (when $\bar{\lambda }=0$) or a rational homology sphere (when $\bar{\lambda }>0$).
		
		The complete classification of compact symmetric spaces that are rational homology spheres is due to Wolf \cite[Theorem 1]{Wo}. Other than spheres, the only simply-connected example is the space ${SU(3)}/{SO(3)}.$ (\ref{point1}) and (\ref{point2}) implies that the eigenvalues of the curvature operator are either 
		$$(\bar{\lambda },\bar{\lambda },\ldots \bar{\lambda })$$
		or 
		$$(-D\bar{\lambda },\frac{N+D}{N-1}\bar{\lambda },\ldots ,\frac{N+D}{N-1}\bar{\lambda }),$$
		where $D=\frac{(N-1)k\theta +N(k-1)}{N-k}$.  In conjunction with the result of \cite[Example 4.5]{NP}, it follows that the manifold must be a spherical space form,
		which completes the proof of Theorem~\ref{thm:main-theorem-1}
	\end{proof}
	
	\begin{remark}
		When $k\le [\frac{n+2}{4}]$, the expression of $\theta$ in (\ref{theta}) must be non-negative, that is 
		$$k\le \frac{2N(N+3)-3nN}{3n(N-3)+2N+6}\quad \text{when}\quad k\le [\frac{n+2}{4}].$$ 
		When $n\ge 14$, we have 
		$$\frac{2N(N+3)-3nN}{3n(N-3)+2N+6}>\frac{n+2}{4}.$$ 
		When $4 \le n \le 13$, 
		$$[\frac{n+2}{4}]<\frac{2N(N+3)-3nN}{3n(N-3)+2N+6}<\frac{n+2}{4},$$
		except $n=6,7,10$. When $n=6,7$, $k$ can choose $1$ and $n=10$, $k$ can choose $1,2.$		
	\end{remark}
	\begin{proof}[\textbf{Proof of Theorem~\ref{thm:main-theorem-2}}] The corrected version of \cite[(4.9)]{DF} is as follows.
\begin{equation*}
		\begin{aligned}
			{R_{tspq}}{R_{tjpl}}{R_{sjql}}
						=& {W_{tspq}}{W_{tjpl}}{W_{sjql}} - \frac{3s}{2n(n - 1)}\left| W \right|^2 + \frac{(n-2)s^3}{n^2(n - 1)^2}\\
			=& {W_{tspq}}{W_{tjpl}}{W_{sjql}} - \frac{3s}{2n(n - 1)}\left| R \right|^2 + \frac{(n+1)s^3}{n^2(n-1)^2}.\\
		\end{aligned}
	\end{equation*}
Here the sign for $\frac{{3s}}{{2n(n - 1)}}{\left| W \right|^2}$ is given as `$+$' in \cite[(4.9)]{DF}, but it should be corrected to `$-$'.
Following the approach of \cite{DF},  we present the corrected formula as follows:
\begin{align}\label{Bochner}
		& \left\langle \Delta R,R \right\rangle =8\sum\limits_{i }{\lambda _{i}^{3}}+\frac{8(n-10)}{3n(n-1)}s\sum\limits_{i}{\lambda _{i}^2}-\frac{4(n-7)(n+2)}{3n^3(n-1)^2}s^3 \nonumber\\ 
		=&8\sum\limits_{i}{\lambda _{i}^3}+\frac{8(n-10)}{3}\bar{\lambda }\sum\limits_{i}{\lambda _{i}^{2}}-\frac{4(n+2)(n-7)(n-1)}{3}\bar{\lambda }^3.
	\end{align}

	We use the above Bochner formula, and the same methods as the proof of Theorem A, then take $n=4,5$ and $k=1$. We get $n=4$, $\theta=\frac{1}{7}$ and $n=5,\theta=\frac{4}{9}$. The remainder of the argument is analogous to that in Theorem~\ref{thm:main-theorem-1}, which completes the proof of Theorem~\ref{thm:main-theorem-2}.
\end{proof}
	\begin{proof}[\textbf{Proof of Theorem~\ref{thm:main-theorem-3}}]
		The argument follows the same pattern as in Theorem ~\ref{thm:main-theorem-1}, with (\ref{theta}) applied for the cases where $n=6,7$ with $k=1$, and $n=10$ with $k=1,2$. This completes the proof of Theorem~\ref{thm:main-theorem-3}.
	\end{proof}
	
	\section{$4$-dimensional Einstein manifolds}
	In this section, we present a geometric application of Lemma~\ref{lemma1} in $4$-dimensional Einstein manifolds.
	
	\begin{proof}[\textbf{Proof of Theorem~\ref{thm:main-theorem-4}}]
		The  Bochner formula (\ref{Bochner}) on $4$-dimensional Einstein manifolds can be written as
\begin{equation}\label{Bochner2}
		\left\langle \Delta R,R \right\rangle =8\sum\limits_{\alpha }{\lambda _{\alpha }^{3}}-\frac{4}{3}s\sum\limits_{\alpha }{\lambda _{\alpha}^2}+\frac{1}{24}s^3 .	\end{equation}
		
		Denote the eigenvalues of $W^+$ by $a_{1} \le a_{2} \le a_{3}$, and those of $W^-$ by $b_{1} \le b_{2} \le b_{3}$, then
\begin{equation}\label{Weyl}
a_{1}+a_{2}+ a_{3}=0, b_{1}+b_{2}+b_{3}=0.
\end{equation}		
		According to \cite{CGT}, on Einstein 4-manifold, the eigenvalues of $\mathring{R}$ are given by
		\begin{equation}\label{RW}
			{\lambda}_{ij}=\frac{s}{12}-a_i-b_j,\quad i,j=1,2,3.
		\end{equation}		
		If  $\mathring{R}$ satisfies (\ref{cone}), then
		\[a_3=\frac{s}{12}-\frac{\lambda_{31}+\lambda_{32}+\lambda_{33}}{3}\le \frac{s}{12}+\bar{\lambda}=\frac{s}{6},\]
		and \[b_3=\frac{s}{12}-\frac{\lambda_{13}+\lambda_{23}+\lambda_{33}}{3}\le \frac{s}{12}+\bar{\lambda}=\frac{s}{6}.\]
		Consequently, $s\ge0$;  moreover,  if $s=0$, then $W^{\pm}=0$, i.e., $M$ is flat. 
		
      By substituting (\ref{RW}) into (\ref{Bochner2}) and applying (\ref{Weyl}), we directly compute to obtain
      \begin{equation}\label{Bochner3}
		\left\langle \Delta R,R \right\rangle =2[s(a_{1}^2+a_{2}^2+a_{3}^2)-12(a_{1}^3+a_{2}^3+a_{3}^3)]+2[s(b_{1}^2+b_{2}^2+b_{3}^2)-12(b_{1}^3+b_{2}^3+b_{3}^3)] .	\end{equation}
      Let
		$$f_1(a_1,a_2,a_3)=s(a_{1}^2+a_{2}^2+a_{3}^2)-12(a_{1}^3+a_{2}^3+a_{3}^3), f_2(b_1,b_2,b_3)=s(b_{1}^2+b_{2}^2+b_{3}^2)-12(b_{1}^3+b_{2}^3+b_{3}^3).$$ 		
		Now we consider the case $s>0$. The expression for $f_1$ can be rewritten as
		\begin{align*}
			& f_1=s(a_{1}^2+a_{2}^2+a_{3}^2)-12(a_{1}^3+a_{2}^3+a_{3}^3) \\ 
			& =s\sum\limits_{i=1}^{3}{(\frac{s}{6}-a_i-\frac{s}{6})^2}+12\sum\limits_{i=1}^{3}{(\frac{s}{6}-a_i-\frac{s}{6})^3} \\ 
			& =12\sum\limits_{i=1}^{3}{(\frac{s}{6}-a_i)^3}-5s\sum\limits_{i=1}^{3}{(\frac{s}{6}-a_i)^2}+\frac{s^3}{4} \\ 
			& =\frac{125}{144}s^3\left[ \sum\limits_{i=1}^{3}{[\frac{12}{5s}(\frac{s}{6}-a_i)]^3}-\sum\limits_{i=1}^{3}{[\frac{12}{5s}(\frac{s}{6}-a_i)]^2} \right]+\frac{s^3}{4}.
		\end{align*}
		Since 
		\[\frac{12}{5s}(\frac{s}{6}-a_i)\ge 0\quad\text{and}\quad \sum\limits_{i}{\frac{12}{5s}(\frac{s}{6}-a_i)}=\frac{6}{5},\]
		it follows from Lemma~\ref{lemma1} that the minimal points of $f_1$ are 
		$$(\frac{12}{5s}(\frac{s}{6}-a_1),\frac{12}{5s}(\frac{s}{6}-a_2),\frac{12}{5s}(\frac{s}{6}-a_3))=(\frac{2}{5},\frac{2}{5},\frac{2}{5})$$
		and  
		$$(\frac{12}{5s}(\frac{s}{6}-a_1),\frac{12}{5s}(\frac{s}{6}-a_2),\frac{12}{5s}(\frac{s}{6}-a_3))=(\frac{3}{5},\frac{3}{5},0),$$
		which correspond respectively to $$(a_1,a_2,a_3)=(0,0,0) \quad\text{and}\quad (a_1,a_2,a_3)=(-\frac{s}{12},-\frac{s}{12},\frac{s}{6}).$$
		Hence,
		$$f_1\ge f_1(0,0,0)=f_1(-\frac{s}{12},-\frac{s}{12},\frac{s}{6})=0.$$
Similarly, the minimal points of $f_2$ are $(b_1,b_2,b_3)=(0,0,0)$ and $(b_1,b_2,b_3)=(-\frac{s}{12},-\frac{s}{12},\frac{s}{6}),$  and
$$f_2\ge f_2(0,0,0)=f_2(-\frac{s}{12},-\frac{s}{12},\frac{s}{6})=0.$$
		In summary, we have
		$$\frac{1}{2}\Delta {{\left| R \right|}^{2}}={{\left| \nabla R \right|}^{2}}+\left\langle \Delta R,R \right\rangle \ge 0.$$
		By the maximum principle, we obtain $\nabla R=0$, and either\\ 
		(i)$(a_1,a_2,a_3)=(b_1,b_2,b_3)=(0,0,0),$ \\
		(ii)$(a_1,a_2,a_3)=(0,0,0)$ and $(b_1,b_2,b_3)=(-\frac{s}{12},-\frac{s}{12},\frac{s}{6}),$\\ 
		(iii)$(a_1,a_2,a_3)=(-\frac{s}{12},-\frac{s}{12},\frac{s}{6})$ and $(b_1,b_2,b_3)=(0,0,0)$,\\ 
		(iv)$(a_1,a_2,a_3)=(b_1,b_2,b_3)=(-\frac{s}{12},-\frac{s}{12},\frac{s}{6}).$ \\
	Therefore,  the curvature operator $\hat{R}$ of the first kind has eigenvalues $(\frac{s}{12},\frac{s}{12},\frac{s}{12},\frac{s}{12},\frac{s}{12},\frac{s}{12})$, $(\frac{s}{12},\frac{s}{12},\frac{s}{12},0,0,\frac{s}{4})$, $(0,0,\frac{s}{4},\frac{s}{12},\frac{s}{12},\frac{s}{12})$, or $(0,0,\frac{s}{4},0,0,\frac{s}{4})$.	
		Applying the classification of $4$-dimensional symmetric spaces, we complete the proof of Theorem D.
	\end{proof}

	\begin{corollary}
		Let $(M,g)$ be an Einstein four-manifold with $4\frac12$-nonnegative curvature operator of the second kind.
		Then  one of the following must be true:\\
		(1) $M$ is flat.\\
		(2)  up to rescaling, $M$ is isometric to a quotient of the round sphere $(\mathbb{S}^4, g_0)$.\\
		(3)  up to rescaling, $M$ is isometric to $\mathbb{CP}^2$ with the Fubini–Study metric.
	\end{corollary}
	\begin{remark}
		This corollary has already been proven in \cite{CGT} using a different method.
	\end{remark}
	\begin{proof}
		Suppose that ${\lambda }_1\le {\lambda }_2\le \cdots \le {\lambda }_{9}$
		are the eigenvalues of $\mathring{R}\,$ and $\bar{\lambda }$ is their average. Then $4\frac12$-nonnegative curvature operator of the second kind $\mathring{R}\,$ satisfies
		\begin{align*}
			& {\lambda }_1+{\lambda }_2+{\lambda }_3\geq-{\lambda }_4-\frac12 {\lambda }_5\\ 
			&\geq-\frac{\frac12 {\lambda }_5+{\lambda }_6+{\lambda }_7+{\lambda }_8+{\lambda }_9}{3} \\ 
			& \geq-\frac{{\lambda }_1+{\lambda }_2+{\lambda }_3+{\lambda }_4+{\lambda }_5+{\lambda }_6+{\lambda }_7+{\lambda }_8+{\lambda }_9}{3} \\ 
			& =-3\bar{\lambda }.
		\end{align*}
		Combining Theorem~\ref{thm:main-theorem-4}, we complete the proof  of  Corollary 4.1 because $\mathbb{S}^2\times \mathbb{S}^2$ has $4\frac12$-negative curvature operator of the second kind.
	\end{proof}
	
	%\bibliographystyle{alpha}
	%\bibliographystyle{plain}
	%\bibliography{ref}	

\end{document}